\newcommand{\rref}[1]{\hyperref[#1]{\ref*{#1}}}
\newtheorem{theorem}{Theorem}[section]
\newtheorem{lemma}[theorem]{Lemma}
\newtheorem{conjecture}[theorem]{Conjecture}
\theoremstyle{definition}
\newcommand{\C}{\mathcal C}
\newcommand{\Z}{\mathcal Z}
\newcommand{\E}{\mathcal E}
\newcommand{\A}{\mathcal{A}}
\begin{document}

\title{On the Existence of the MLE for a Directed Random Graph Network Model with Reciprocation
\author{Alessandro Rinaldo\thanks{Email: {\tt arinaldo@stat.cmu.edu}}\\ 
Department of Statistics\\
Carnegie Mellon University\\
Pittsburgh, PA, 15213-3890 USA  
\and
Sonja Petrovi\'c\thanks{Email: {\tt petrovic@math.uic.edu}}\\
Department of Mathematics, Statistics, and Computer Science\\
University of Illinois\\
Chicago, IL, 60607
\and
Stephen E. Fienberg\thanks{Email: {\tt fienberg@stat.cmu.edu}}\\
Department of Statistics\\
 Machine Learning Department\\
  Cylab\\
Carnegie Mellon University\\
Pittsburgh, PA, 15213-3890 USA}
\date{}
}

\maketitle

\begin{abstract}
\cite{holl:lein:1981}  proposed  a directed random graph model, the $p_1$ model,  to describe  dyadic interactions in a social network.   In previous work \citep{petr:rina:fien:2009}, we studied the algebraic properties of the $p_1$ model and showed that it is a toric model specified by a multi-homogeneous ideal.  We conducted an extensive study of the Markov bases for $p_1$ that incorporate explicitly the constraint arising from multi-homogeneity. 
Here we consider the properties of the corresponding  toric variety and relate them to the conditions for the existence of the maximum likelihood and extended maximum likelihood estimators or the model parameters. Our results are directly relevant to the estimation and conditional goodness-of-fit testing problems in $p_1$ models.\\

\noindent {\bf Keywords}:  Algebraic statistics, Dyadic independence, Exponential random graph model,  Holland-Leinhardt $p_1$ model, Markov basis.
\end{abstract}

%\tableofcontents

\section{Introduction}
\label{section:introduction}

\cite{holl:lein:1981}  derived a statistical model, called $p_1$, for directed random graphs to describe  dyadic interactions in a social network.  The $p_1$ model, one of the earlier and most relevant statistical model for the analysis of  network data,  remains popular in applications, but some of its statistical properties are not well understood, including a relevant asymptotics.   
In the $p_1$ model, we represent the network by a directed graph where for every pair of nodes $(i,j)$  we can observe four possible configurations: an edge from $i$ to $j$, an edge from $j$ to $i$, a bi-directed edge (or both directed edges), or no edge at all.   Each pair of nodes, or {\it dyad}, is in any of these states independently of the other dyads. 
%Quoting from \cite{holl:lein:1981}, the probability distribution over the entire network depends on various parameters quantifying the ``attractiveness" and ``productivity'' of the individual nodes, the ``strength of reciprocity" of each dyad and the network ``density,"  or overall tendency to have edges. 
Although the $p_1$ model is  a special case of the broader and well understood class of log-linear models, e.g., see~\cite{fien:wass:1981a,fien:meye:wass:1985}, it possesses unique features that pose challenges of both theoretical and practical nature. We described some of these in \cite{petr:rina:fien:2009}, where
 we revisited the Holland-Leinhardt $p_1$ model using the tools and language  of  algebraic statistics \citep[see, e.g.,][]{diac:stur:1998,pist:ricco:wynn:2001,drto:stur:sull:2009}.
The results from that paper are summarized in Section~\ref{sec:background}. 
Here we focus on the underlying geometry of $p_1$ models, rather then their algebraic properties. Our main result is  Theorem~\ref{thm:mle}, which gives explicit conditions for the existence of the maximum likelihood estimator and offers algorithmic improvement in the study of the existence of MLE. %, as suggested by the numerical evidence gathered in Section~\ref{sec:computations}.  
%{\bf     INSERT SOME PHRASE REFERRING TO the paper w/ eriksson and you -- about polyhedral conditions of nonexistence of MLE - and why this is better because you can now look at the cone, whose facial set is easier to work with ?! There is a paragraph about this at the end of section 3.2, but i feel that it would be good to state briefly some of that stuff here...}
%In addition, in the last section we summarize computations for small networks, illustrating further the subtleties of the estimation problem.
%, and providing numerical evidence for the way in which Theorem~\ref{thm:mle} simplifies the computational tasks. 
In Section~\ref{sec:computations} we carry out some computations for small networks to exemplify our results and illustrate further the subtleties of the estimation problem.

%%%%%%%%%%%%%%%%%%%%%%%%%%%%%%%%%%%%%%%%%%%%%%%%%%%%%%%%%%%%%%%%%%%%%%%%%%%%%%%%%%%%
%\section{The Holland-Leindhardt  $p_1$ model: description and structure}\label{sec:p1.model}
\section{The  $p_1$ model: notation, description, and structure}\label{sec:p1.model}
\label{sec:background}

Versions of the  $p_1$ model prescribe probability distributions over the set of directed graphs on $n$ nodes with random directed and bi-directed edges. Each node corresponds to a unit in a given population of interest, and the edges  are random variables that represent relations or interactions between units. The resulting random graph provides a static snapshot of the interactions among a set of agents in the populations, and it is often referred to as a network.  

In $p_1$ models, the random edge between any pair of nodes $i$ and $j$, or {\it dyad}, is  modeled independently from all the others.  For any dyad defined by the pair of nodes $(i,j)$ there are four possible edge configurations: 
\begin{enumerate}
\item node $i$ has an outgoing edge into node $j$: $i \rightarrow j$;
\item node $i$ as an incoming edge originating from node $j$: $i \leftarrow j$;
\item there is a bi-directed edge between node $i$ and node $j$: $i \longleftrightarrow j$;
\item there is no edge between nodes $i$ and $j$.
\end{enumerate}
Following the notation we established in  \cite{petr:rina:fien:2009}, which is slightly different than the original notation of  \cite{holl:lein:1981}, for every pair of nodes $(i,j)$ we define the vector
\begin{equation}\label{eq:pij}
p_{i,j} = \left( p_{i,j}(0,0),p_{i,j}(1,0),p_{i,j}(0,1),p_{i,j}(1,1) \right) \in \Delta_3
\end{equation}
containing the probabilities of the four possible edge types, where $\Delta_3$ is the standard simplex in $\mathbb{R}^4$. The numbers $p_{i,j}(1,0)$, $p_{i,j}(0,1)$ and $p_{i,j}(1,1)$ denote the probabilities of the edge configurations $i \rightarrow j$, $i \leftarrow j$ and  $i \longleftrightarrow j$, respectively, and $p_{i,j}(0,0)$ is the probability that there is no edge between $i$ and $j$ (thus, $1$ denotes the outgoing side of the edge).  Notice that, by symmetry $p_{i,j}(a,b) = p_{j,i}(b,a)$, for any $a,b \in \{ 0,1\}$ and that 
\begin{equation}\label{eq:sum.1}
p_{i,j}(0,0) + p_{i,j}(1,0) + p_{i,j}(0,1) + p_{i,j}(1,1) = 1.
\end{equation}
The fundamental assumption underlying $p_1$ models is that the dyads are independent. This is formalized by modeling each of the ${n \choose 2}$ dyads as mutually independent draws from multinomial distributions with class probabilities $ p_{i,j}$ for $i < j$.
Specifically, the Holland-Leinhardt  $p_1$ model specifies the multionomial probabilities of each dyad $(i,j)$ in logarithmic form as follows:
\begin{equation}\label{eq:hl.eqs}
\begin{array}{rcl} 
    \log p_{i,j}(0,0) &= &\lambda _{i,j}\\
    \log p_{i,j}(1,0) &= & \lambda _{i,j} + \alpha_i + \beta_j + \theta \\ 
    \log p_{i,j}(0,1) &= & \lambda _{i,j} + \alpha_j + \beta_i + \theta \\
    \log p_{i,j}(1,1) &= & \lambda _{i,j} + \alpha_i + \beta_j + \alpha_j + \beta_i + 2\theta + \rho_{i,j}.
\end{array}
\end{equation}
The real numbers $\theta$, $\alpha_i$, $\beta_i$, $\rho_{i,j}$ and $\lambda_{i,j}$ for all $i< j$ are the model parameters. The parameter $\alpha_i$ quantifies the effect of an outgoing edge from node $i$, the parameter $\beta_j$ instead measures the effect of an incoming edge into node $j$, while $\rho_{i,j}$ controls the added effect of reciprocated edges (in both directions). The  parameter $\theta$ measures the propensity of the network to have edges and, therefore, controls the ``density" of the graph. The parameter $\lambda_{i,j}$ functions as a normalizing constant to ensure that \eqref{eq:sum.1} holds for each each dyad $(i,j)$. Note that, in order for the model to be identifiable, additional linear constraints need to be imposed on its parameters. We refer the interested readers to the original paper on $p_1$ model by \cite{holl:lein:1981} for an extensive interpretation of the model parameters.
%We take note here of a basic yet rather fundamental feature of our settings that apply to $p_1$ models as well as to many other statistical models for network: data become available in the form of {\it one} observed network. Even though for each pair of nodes $(i,j)$, the four probabilities $p_{ij}(\bullet,\bullet)$ are strictly positive according to the defining equations \eqref{eq:hl.eqs},  each dyad can be observed in one and only one of the 4 possible states. Thus, despite the richness and complexity of $p_1$ models, their statistical analysis is severely limited by the disproportionally small information content in the data. This  fact makes $p_1$ and, more generally, network models challenging, and it greatly affects our analysis, as we will show.

As noted in \cite{fien:wass:1981}, different variants of the $p_1$ model can be obtained by constraining the model parameters. In \cite{petr:rina:fien:2009} we consider three special cases of the basic $p_1$ model, which differ in the way the reciprocity parameter is  modeled:
\begin{enumerate}
  \item  $\rho_{i,j}=0$, {\it no reciprocal effect};
    \item $\rho_{i,j}=\rho$, {\it constant reciprocation};
    \item $\rho_{i,j}=\rho + \rho_i+\rho_j$, {\it edge-dependent reciprocation}.
\end{enumerate}

As it is often the case with network data, we assume that data become available in the form of {\it one} observed network. Thus, each dyad $(i,j)$ is observed in only one of its four possible states and this one observation is a random vector in $\mathbb{R}^4$ with a $\mathrm{Multinomial}(1,p_{i,j})$ distribution. As a result, data are sparse and, even though the dyadic probabilities are strictly positive according to the defining equations \eqref{eq:hl.eqs}, only some of the model parameters may be estimated from the data. 

For a network on $n$ nodes, we represent the vector of $2n(n-1)$ dyadic probabilities as
\[
p=(p_{1,2}, p_{1,3}, \ldots,p_{n-1,n}) \in \mathbb{R}^{2n(n-1)},
\]
where, for each $i <j$, $p_{ij}$ is given as in \eqref{eq:pij}. The $p_1$ \emph{model} is the set of all probability distributions that satisfy the Holland-Leinhardt equations \eqref{eq:hl.eqs}.
Give the settings just described, the $p_1$ model is log-linear; that is, the set of candidate probabilities $p$ is such that $\log p$ is in the linear space spanned by the rows of some {design matrix} $A$, which can be constructed as follows (this construction is by no means unique). The $2 n (n-1)$ columns of
$A$ are indexed by the entries of the vectors $p_{i,j}$, $i < j$, where the $p_{i,j}$'s are ordered lexicographically,  and its rows by the model parameters, ordered arbitrarily. The $(r,c)$ entry of $A$ is equal to the coefficient of the $c$-th parameter in the logarithmic expansion of the $r$-the probability as indicated in \eqref{eq:hl.eqs}. In particular, notice that the entries of $A$ can only be $0$, $1$ or $2$.
For  example, in the case $\rho_{ij}= \rho + \rho_i + \rho_j$, the matrix $A$ has ${n \choose 2} + 3n + 3$ rows. When $n=3$, the design matrix corresponding to this model is

\[
\begin{array}{l}
 \lambda_{12}\\ 
 \lambda_{13}\\ 
 \lambda_{23}\\ 
 \theta\\ 
 \alpha_1 \\ 
 \alpha_2 \\ 
 \alpha_3\\ 
 \beta_1 \\ 
 \beta_2 \\ 
 \beta_3 \\ 
 \rho \\ 
 \rho_1 \\ 
 \rho_2 \\ 
 \rho_3\\
\end{array}
\begin{array}{|cccccccccccc}
\multicolumn{4}{c|}{p_{1,2}} & \multicolumn{4}{c|}{p_{1,3}} & \multicolumn{4}{c}{p_{2,3}}\\
\hline
     1 &    1 &    1 &    1 &    0 &    0 &    0 &    0 &    0 &     0 &     0 &     0 \\
    0 &    0 &    0 &    0 &    1 &    1 &    1 &    1 &    0 &     0 &     0 &     0 \\
    0 &    0 &    0 &    0 &    0 &    0 &    0 &    0 &    1 &     1 &     1 &     1 \\
    0 &    1 &    1 &    2 &    0 &    1 &    1 &    2 &    0 &     1 &     1 &     2 \\
    0 &    1 &    0 &    1 &    0 &    1 &    0 &    1 &    0 &     0 &     0 &     0 \\
    0 &    0 &    1 &    1 &    0 &    0 &    0 &    0 &    0 &     1 &     0 &     1 \\
    0 &    0 &    0 &    0 &    0 &    0 &    1 &    1 &    0 &     0 &     1 &     1 \\
    0 &    0 &    1 &    1 &    0 &    0 &    1 &    1 &    0 &     0 &     0 &     0 \\
    0 &    1 &    0 &    1 &    0 &    0 &    0 &    0 &    0 &     0 &     1 &     1 \\
    0 &    0 &    0 &    0 &    0 &    1 &    0 &    1 &    0 &     1 &     0 &     1 \\
    0 &    0 &    0 &    1 &    0 &    0 &    0 &    1 &    0 &     0 &     0 &     1 \\
    0 &    0 &    0 &    1 &    0 &    0 &    0 &    1 &    0 &     0 &     0 &     0 \\
    0 &    0 &    0 &    1 &    0 &    0 &    0 &    0 &    0 &     0 &     0 &     1 \\
    0 &    0 &    0 &    0 &    0 &    0 &    0 &    1 &    0 &     0 &     0 &     1 \\
\end{array}
\]

%  Indeed,  the design matrix encodes a homomorphism between
% two polynomial rings:
%\begin{align*}
%	\varphi_n: \mathbb C[p_{ij}(a,b) : i<j\in\{1\dots n\},a,b\in\{0,1\}]\to \mathbb C[\lambda_{ij},\alpha_i,\beta_i,\theta,\rho_{ij} : i<j\in\{1,\dots,n\}].
%\end{align*}
% induced by 
%\begin{align*}
%	p_{ij}(a,b) \mapsto \lambda_{ij} \alpha_i^{a} \alpha_j^{b} \beta_i^{b} \beta_j^{a} \theta^{a+b} \rho_{ij}^{\min(a,b)}
%\end{align*}
%where $a,b\in\{0,1\}$, and we consider parameters  
%$\lambda_{ij}$, $\alpha_i$, $\beta_i$, $\rho_{ij}$ and $\theta$ for $i,j\in\{1,\dots,n\}$
%as unknowns.
%The design matrix $A$ simply describes the action of $\varphi_n$ on the probabilities $p_{ij}(\bullet,\bullet)$.   The entries of 
%the design matrix are either $0$ or $1$; there is a $1$  
%in the $(r,c)$-entry of the matrix if the parameter indexing row $r$ appears in the image of the $p_{ij}$ indexing the column $c$.  
%The number of columns of $A$ is always equal to $2n(n-1)$, while the number of rows depend on the version of $p_1$ model we consider. 

Let $\zeta = \{ \lambda_{i,j}, \alpha_i, \beta_j, \theta, \rho, \rho_i, \rho_j, i < j\}$ be the vector of model parameters, whose dimension $d$ depends on the type of restrictions on the $p_1$. Then, using the design matrix $A$, one can readily verify that the Holland-Leinhardt equations have  the log-linear representation 
\[
\log p = A^\top \zeta.
\]
Equivalently, letting $a_k$ and $p_k$ be the $k$-th column of $A$ and the $k$-th entry of $p$, respectively,
\[
p_k = \prod_{l=1}^d (e^{\zeta(l)})^{a_k(l)},
\]
which, upon applying the exponential transformation point-wise, is equivalent to
\begin{equation}\label{eq:var}
p_{i,j}(a,b) =  e^{\lambda_{i,j}} (e^{\alpha_i})^{a} (e^{\alpha_j})^{b} (e^{\beta_i})^{b} (e^{\beta_j})^{a} (e^{\theta})^{a+b} (e^{\rho_{i,j}})^{\min(a,b)}, \quad \forall i < j , \;\; \forall a,b \in \{0,1\}.
\end{equation}

%We indicate with 
We denote by $M_A$ the $p_1$ model defined by the design matrix $A$. 
%{\bf we already said the following phrase in parentheses before: 
%}
%(that is, the set of points satisfying the Holland-Leinhardt equations \eqref{eq:hl.eqs}). 
It is the relatively open set in the positive orthant of $\mathbb{R}^{2n(n-1)}$ of dimension  $\mathrm{rank}(A)$ consisting of ${n \choose 2}$ probability distrbutions satisfying \eqref{eq:var}.  

Let $\mathcal{X}_n \subset \mathbb{R}^{2n(n-1)}$ denote the sample space, i.e. the set of all observable networks on $n$ nodes. Then,  every point $x$ in the sample space $\mathcal X$ can be written as
\[
x = (x_{1,2},x_{1,3},\ldots,x_{n-1,n}), 
\]
where each of the  ${n \choose 2}$ subvectors $x_{i,j}$ is a vertex of $\Delta_3$. Notice that $|\mathcal{X}_n| = 4^{n(n-1)}$. \\

\noindent {\bf Remark.} This way of representing  a network on $n$ nodes as a highly-constrained $0/1$ vector of dimension $2n(n-1)$ may appear redundant. Indeed, as in \cite{holl:lein:1981}, we could more naturally represent an $n$-node network  using the $n \times n$ incidence matrix with $0/1$ off-diagonal entries, where the $(i,j)$ entry is $1$ is there is an edge from $i$ to $j$ and $0$ otherwise. While this representation is more intuitive and parsimonious (as it only requires $n(n-1)$ bits), whenever $\rho \neq 0$, the sufficient statistics for the reciprocity parameter are not linear functions of the observed network. As a  consequence, the ajacency matrix representation does not lead directly to a log-linear model. \\

For any $x \in \mathcal{X}_n$, each subvector $x_{i,j}$ is the realization of a random vector in $\mathbb{R}^4$ having Multinomial distribution with size $1$ and class probabilities $p^0_{i,j} \in \Delta_3$, where
\[
p^0 = (p^0_{1,2},p^0_{1,3},\ldots,p^0_{n-1,n})
\]
is an unknown vector in $M_A$ (thus, $p^0 > 0)$. Furthermore, \eqref{eq:hl.eqs} implies the Multinomial vectors representing the dyad configurations are mutually independent. Then,
for any point $x \in \mathcal{X}_n$, the {\it likelihood function} is the function $\ell \colon M_A \rightarrow [0,1]$ given by
\begin{equation}\label{eq:sup}
\ell_x(p) = \prod_{i<j} \left( \prod_{k=1}^4 p_{i,j}(k)^{x_{i,j}(k)}\right)
\end{equation}
and the {\it maximum likelihood estimate}, or MLE, of $p^0$ is 
\begin{equation}\label{eq:MLE}
\hat{p} = \mathrm{argmax}_{p \in M_A} \ell_x(p).
\end{equation}
Despite $\ell_x$ being smooth and concave on its domain for each $x \in \mathcal{X}_n$, there exist points $x \in \mathcal{X}_n$ for which the (unique) supremum of $\ell_x$ is achieved on the boundary of $M_A$, and, therefore, it will have some zero coordinates.  In this case, the MLE  of $p_0$ is said  not to exist. Indeed, if a vector $p$ with zero entries is to satisfy the Holland and Leinhardt likelihood equations, then some of the model parameters must be equal to $-\infty$. Furthermore, nonexistence of the MLE implies that only certain linear combinations of the natural parameters, or, equivalently, that only certain entries of $p$, are estimable \citep[see, e.g.,][]{ERG}. While it is well known that nonexistence of the MLE is a potential issue, very little progress has been made since the work of \cite{haberman:77} on the problems of deciding whether the MLE is non-existent and which parameters are estimable if the MLE does not exist.

% corresponding to sets zeros in adjacency matrix.   One can condition on some of these observed patterns of zeros, e.g., zero row totals or column totals in the adjacency matrix, but other patterns of zeros are more enigmatic.  %By theory of exponential families, (see, e.g., \cite{BN}, \cite{brown:86} and \cite{lau:96}), there exists a unique optimizer $\hat{p}$ in \eqref{eq:sup}, for any value of $x \in \mathcal{X}_n$.

%{\bf 
%MORE MOTIVATION !!!!!
%}

\subsection{Markov bases}
\label{sec:markov} 

We  briefly recall the main results from \cite{petr:rina:fien:2009}, describing the Markov bases for the different versions of the $p_1$ model.
 
By a fundamental theorem of Markov bases \cite{diac:stur:1998}, these correspond to generating sets of toric ideals encoded by the design matrices. 
Our study of Markov moves was motivated by noticing special components of the design matrices. As the components that were relevant for that study make an appearance in our analysis of MLE, let us introduce them here. 

First, we distinguish the design matrices (``$A$" above) for two of the the three cases of the $p_1$ model: denote the $n$-node matrices by $\Z_n$ %, $\C_n$ 
and $\E_n$ for zero %, constant 
and edge-dependent reciprocation, respectively. 
For each case, we consider the matrix of the {simplified} model obtained %from the $p_1$ model 
by  simply forgetting the normalizing constants $\lambda_{i,j}$.  Denote these simplified matrices  by 
 $\tilde\Z_n$ %, $\tilde\C_n$ 
 and $\tilde\E_n$. Note that ignoring $\lambda_{i,j}$'s results in zero columns for each column indexed by $p_{i,j}(0,0)$,  and so we are effectively also ignoring all $p_{i,j}(0,0)$'s.
Hence, the matrices $\tilde\Z_n$ %, $\tilde\C_n$ 
and $\tilde\E_n$ have ${{n}\choose{2}}$ less rows and ${{n}\choose{2}}$ less columns than $\Z_n$ %, $\C_n$ 
 and $\E_n$, respectively.
The second special matrix will be denoted by $\A_n$ and is common to all three cases. It is obtained from 
$\tilde\Z_n$ %, $\tilde\C_n$ 
 or $\tilde\E_n$ by ignoring the columns indexed by $p_{i,j}(1,1)$ for all $i$ and $j$, and then removing any zero rows.  

It turns out that the Markov basis of the toric ideal of the ``common submatrix" $\A_n$ essentially determines the Markov bases for the simplified models. More precisely, \cite[Theorem 3.2.]{petr:rina:fien:2009} states that the ideal of the simplified model defined by  ${\tilde\Z_n}$ can be decomposed into the ideal defined by $\A_n$ plus another ideal $T$.  Here, $T$ consists of moves that replace a bidirected edge between any pair of nodes $(i,j)$ by the two edges between them (from $i$ to $j$ and vice versa). 
Further, \cite[Theorem 3.4.]{petr:rina:fien:2009} describes the ideal of the simplified model defined by $\tilde\E_n$: it can be decomposed into a sum of the ideal defined by $\A_n$ and $Q$, where $Q$ consists of moves that replace two bidirected edges between two pairs of nodes $(i,j)$ and $(k,l)$ by two other bidirected edges between $(i,k)$ and $(j,l)$.

%\begin{theorem}[\cite{petr:rina:fien:2009}, Theorem 3.2.]
%\label{thm:simplified-noRho-decomposition} 
%%    With the notation as above, $I_{\tilde\Z_n} = I_{{\A}_n} + T $, where $T$ is the ideal generated by the binomials of the form     $$p_{ij}(0,1) p_{ij}(1,0) -p_{ij}(1,1)$$   for all pairs of nodes $i<j$.
%	The ideal of the simplified model defined by  ${\tilde\Z_n}$ can be decomposed into the ideal for a submodel defined by $\A_n$ plus another ideal $T$.  Here, $T$ consists of moves that replace a bidirected edge between any pair of nodes $(i,j)$ by the two edges between them (from $i$ to $j$ and vice versa).
%\end{theorem}

%\begin{theorem}[\cite{petr:rina:fien:2009}, Theorem 3.4.]
%\label{thm:simplified-EdgeRho-decomposition}
%%{\bf Perhaps just summarize without formulas??\\}
%%    Let $Q$ be the ideal generated by the quadrics of the form
%%    \begin{align*}
%%        p_{ij}(1,1)p_{kl}(1,1)-p_{ik}(1,1)p_{jl}(1,1)
%%    \end{align*}
%%    for each set of indices $1\leq i,j,k,l \leq n$. Then     $   I_{\tilde\E_n} = I_{\A_n} + Q$     for every $n\geq 4$.
%	The ideal of the simplified model defined by $\tilde\E_n$ can be decomposed into a sum of the ideal defined by the submatrix $\A_n$ and $Q$, where $Q$ consists of moves that replace two bidirected edges between two pairs of nodes $(i,j)$ and $(k,l)$ by two other bidirected edges between $(i,k)$ and $(j,l)$.
%\end{theorem}

Our main result used a classical algebraic construction %(\cite{Vill}) 
associating  to any graph $G$  a toric ideal $I_G$, as follows. Let $\phi$ be the map between polynomial rings defined by $\phi(x_{ij}) = v_iv_j$ for each edge $\{i,j\}$ of $G$. 
Its kernel $I_G$ is the {defining ideal of the edge subring} of $G$. 
We show that these ideals provide the basic building blocks for the essential Markov moves of the $p_1$ model. 

\begin{theorem}[\cite{petr:rina:fien:2009}, Theorem 3.6.]
\label{thm:essentialMarkovForModel}
	    The essential Markov moves for the $p_1$ model on $n$ nodes     can be obtained from the Graver basis of $I_{G}$     together with the Markov basis for the ideal $I_{K_n}$. 
   	  Here,    $K_n$ is the complete graph on $n$ vertices, and     $G$ is the  subgraph of the bipartite graph $K_{n,n}$ with edge set $\{v_i,w_j\}$ for $1\leq i\neq j\leq n$. 
\end{theorem}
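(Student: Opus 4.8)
The plan is to reduce the full multi-homogeneous $p_1$ problem to the two toric ideals named in the statement, by combining the decomposition results of \cite{petr:rina:fien:2009} (their Theorems 3.2 and 3.4) with a direct geometric reading of the common submatrix $\A_n$. The first step is to recognize $I_{\A_n}$ as an edge-subring ideal. Writing $\A_n$ with its columns indexed by the directed configurations $p_{i,j}(1,0)$ and $p_{i,j}(0,1)$ and its rows by $\theta,\alpha_1,\dots,\alpha_n,\beta_1,\dots,\beta_n$, each column carries exactly three ones: a $\theta$, an $\alpha$ at the source, and a $\beta$ at the target. Matching $p_{i,j}(1,0)\leftrightarrow\{v_i,w_j\}$ and $p_{i,j}(0,1)\leftrightarrow\{v_j,w_i\}$ identifies these columns with the edges $\{v_i,w_j\}$, $i\neq j$, of $G\subset K_{n,n}$; the all-ones $\theta$ row is redundant, since any binomial in $I_G$ already uses equally many edge variables on each side, so $I_{\A_n}=I_G$. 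An analogous reading of the bidirected columns $p_{i,j}(1,1)$, restricted to the reciprocation rows $\rho_1,\dots,\rho_n$, identifies the unordered pair $\{i,j\}$ with an edge of $K_n$ and yields the edge-subring ideal $I_{K_n}$.

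Second, I would pin down what the word \emph{essential} forces. The defining feature of the $p_1$ ideal, relative to the simplified matrices $\tilde\Z_n$ and $\tilde\E_n$, is the block of normalizing rows $\lambda_{i,j}$: these impose that every admissible move preserve the total count within each dyad, which is an extra grading on top of the edge-ring grading. Using the decompositions $I_{\tilde\Z_n}=I_{\A_n}+T$ and $I_{\tilde\E_n}=I_{\A_n}+Q$, I would split an arbitrary essential move into a directed part, supported on the $p_{i,j}(1,0),p_{i,j}(0,1)$ columns and hence lying in $I_G$, and a bidirected part, supported on the $p_{i,j}(1,1)$ columns, with the $p_{i,j}(0,0)$ slots absorbing the dyadic-total bookkeeping; the moves in $T$ and $Q$ describe precisely how the two parts communicate. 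On the directed side the key observation is that imposing the dyadic grading refines the fibers: each primitive binomial of $I_G$ then sits in a two-element multi-homogeneous fiber and is therefore indispensable, whereas a non-primitive move admits a conformal splitting. This is the reason the \emph{Graver} basis of $I_G$, the set of all primitive binomials, rather than a mere Markov basis, is the correct directed building block.

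Third, on the bidirected side I would verify that the $Q$-moves, which exchange $\{i,j\}+\{k,l\}$ for $\{i,k\}+\{j,l\}$, preserve each node's bidirected degree and so are exactly the edge-subring moves of $K_n$; here a \emph{Markov} basis of $I_{K_n}$ suffices, because the refining dyadic grading is neutralized by the free $p_{i,j}(0,0)$ compensations, so the bidirected fibers are not shrunk to singletons and no passage to the Graver basis is needed. Combining the two sides, by lifting each directed Graver move and each bidirected Markov move to the full model through a choice of $(0,0)$ entries that balances the dyadic totals, yields a move set that connects every multi-homogeneous fiber and consists of indispensable moves, which is the asserted essential Markov basis.

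The main obstacle I anticipate is the clean separation claim: proving that every essential move genuinely decomposes, with no irreducible cross term, into a directed piece governed by $I_G$ and a bidirected piece governed by $I_{K_n}$, and, most delicately, establishing the asymmetry in which type of basis is required on each side. This rests on a careful fiber analysis under the combined edge-ring and dyadic gradings: one must show that primitiveness in $I_G$ becomes equivalent to indispensability once the $\lambda_{i,j}$ grading is imposed, while the analogous equivalence fails for $K_n$ precisely because of the $p_{i,j}(0,0)$ degrees of freedom. Controlling the interaction between the $T$- and $Q$-moves and the two gradings is where the technical work will concentrate.
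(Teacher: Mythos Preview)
The paper you were given does \emph{not} contain a proof of this theorem at all. Theorem~\ref{thm:essentialMarkovForModel} is quoted verbatim from \cite{petr:rina:fien:2009} in Section~\ref{sec:markov} as part of a summary of prior work; the present paper only restates it, along with the companion decompositions (Theorems 3.2 and 3.4 of that reference), in order to motivate the role of the common submatrix $\mathcal{A}_n$ in the MLE analysis of Section~\ref{sec:mle}. There is therefore nothing in this paper to compare your proposal against.

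On the substance of your sketch: the two identifications you make---columns of $\mathcal{A}_n$ with edges of $G\subset K_{n,n}$, and bidirected columns (restricted to the $\rho_i$ rows in the edge-dependent case) with edges of $K_n$---are exactly the mechanism behind the original result, and your use of the decompositions $I_{\tilde{\mathcal Z}_n}=I_{\mathcal A_n}+T$ and $I_{\tilde{\mathcal E}_n}=I_{\mathcal A_n}+Q$ is the right scaffolding. Where your write-up remains a sketch rather than a proof is precisely where you flag it: the asymmetry between requiring the \emph{Graver} basis of $I_G$ but only a \emph{Markov} basis of $I_{K_n}$ hinges on a fiber-by-fiber argument under the multi-homogeneous $\lambda_{i,j}$ grading, and your heuristic (``primitive binomials of $I_G$ sit in two-element multi-homogeneous fibers and are therefore indispensable'') would need to be made rigorous. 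That argument, together with the lifting of moves via the $p_{i,j}(0,0)$ slots, is carried out in \cite{petr:rina:fien:2009}; if you want to complete your proof you should consult that paper directly rather than the present one.
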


\section{Maximum Likelihood Estimation in $p_1$ Models}\label{sec:mle}

In order to derive our results about existence of the MLE for $p_1$ models, we first need to describe the geometric properties of these models. These properties rely in a fundamental way on the re-parametrization of $p_1$ models as log-linear models. Though, as pointed out above, this parametrization is less parsimonious than the original one by \cite{holl:lein:1981}, it is more suitable to our geometric analysis.

\subsection{Geometric properties of $p_1$ models}

As we explained in Section \ref{sec:p1.model}, the statistical model specified by a $p_1$ model with design matrix $A$ is the set $M_A$ of all vectors satisfying the Holland and Leinhardt equations \eqref{eq:hl.eqs}. 
Let $V_A \subset \mathbb{R}^{2n(n-1)}$ be the set of non-negative points points satisfying \eqref{eq:var}. In fact, $V_A$ arises as the solution set of a system of polynomial equations, and, in the language of algebraic geometry, it is the the toric variety corresponding to the toric ideal $I_A$ \citep[see][]{CLO,St,petr:rina:fien:2009}. It is immediate to see that the closure of $M_A$ is precisely the set
\[
S_A := V_A \cap D_n,
\] 
where $D_n =  \left\{ \left(p_{1,2}, p_{1,3}\ldots,p_{n-1,n} \right) \colon p_{ij} \in \Delta_3, \forall i \neq j \right\}$, with $\Delta_3$ the standard $3$-simplex.  Any point in $S_A$ is comprised of ${n \choose 2}$ probability distributions over the four possible dyad configurations $(0,0)$, $(1,0)$, $(0,1)$ and $(1,1)$, one for each pair of nodes in the network. However, in addition to all strictly positive dyadic probability distributions satisfying \eqref{eq:hl.eqs}, $S_A$ contains also probability distributions with zero coordinates that are obtained as pointwise limits of points in $M_A$  \citep[see, for example,][]{gms:06}.

The {\it marginal polytope} of the $p_1$ model with design matrix $A$ is defined as
\[
P_A := \{ t \colon t = A p, p \in D_n\}.
\]
As we will see, $P_A$ plays a crucial role for deciding whether the MLE exists given an observed network $x$.
For our purposes, it is convenient to represent $P_A$ as a Minkowski sum of simpler polytopes (for background, see \cite{ZIE:95}). To see this, decompose the matrix $A$ as
\[
A = [A_{1,2} A_{1,3} \ldots A_{n-1,n}], 
\]
where each $A_{i,j}$ is the submatrix of $A$ corresponding to the four probabilities
\[
\{ p_{ij}(0,0),p_{ij}(1,0), p_{ij}(0,1),p_{ij}(1,1) \}
\] 
for the $(i,j)$-dyad. Then, denoting by $\sum$ the Minkowski sum of polytopes, it follows that 
\[
P_A = \sum_{i < j} \mathrm{conv}(A_{i,j}).
\]
We will concern ourselves with the boundary of $P_A$, which we will handle using the following theorem, due to \cite{gritzman.sturmfels:1993}. For a polytope $P \subset \mathbb{R}^d$ and a vector $c \in \mathbb{R}^d$, we write
\[
S(P ; c) := \{ x \colon x^\top c \geq y^\top c, \forall y \in P\}
\]
for the set of maximizers $x$ of the inner product $c^\top x$ over $P$.
\begin{theorem} \label{thm:GS:93}
\citep{gritzman.sturmfels:1993}. Let $P_1, P_2, \ldots, P_k$ be polytopes in $\mathbb{R}^d$ and let $P = P_1 + P_2 +  \ldots + P_k$. 
A nonempty subset $F$ of $P$ is a face of $P$ if and only if 
$F = F_1 + F_2 + \ldots + F_k$ for some face $F_i$ of $P_i$ such that there exists $c \in \mathbb{R}^d$  with $F_i = S(P_i ; c)$ for all $i$. Furthermore, the decomposition 
$F = F_1 + F_2 + \ldots + F_k$ of any nonempty face $F$ is unique. 
\end{theorem}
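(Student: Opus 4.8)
The plan is to reduce everything to one maximization identity, which I would establish first: for every $c \in \mathbb{R}^d$,
\[
S(P; c) = S(P_1; c) + S(P_2; c) + \cdots + S(P_k; c).
\]
The proof is elementary and splits into two inclusions. For the inclusion $\supseteq$, I pick $x_i \in S(P_i;c)$ and observe that for any $y = y_1 + \cdots + y_k \in P$ with $y_i \in P_i$ one has $c^\top y = \sum_i c^\top y_i \le \sum_i c^\top x_i = c^\top\!\left(\sum_i x_i\right)$, so $\sum_i x_i$ maximizes $c$ over $P$. For the inclusion $\subseteq$, I take $x \in S(P;c)$ together with any representation $x = \sum_i x_i$, $x_i \in P_i$; if some $x_j$ failed to maximize $c$ over $P_j$, replacing it by a maximizer would strictly increase $c^\top x$, a contradiction. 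A byproduct is the scalar identity $\max_P c^\top x = \sum_i \max_{P_i} c^\top x$, which I will reuse.

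Granting the identity, the characterization of faces is almost immediate, since the nonempty faces of a polytope are exactly the sets $S(\,\cdot\,; c)$ for $c \in \mathbb{R}^d$. For the ``only if'' direction, a nonempty face $F$ of $P$ equals $S(P;c)$ for some $c$, and the identity rewrites this as $F = \sum_i F_i$ with each $F_i := S(P_i;c)$ a face of $P_i$ realized by the \emph{common} functional $c$. Conversely, if $F = \sum_i F_i$ with $F_i = S(P_i; c)$ for a single common $c$, the identity gives $F = S(P;c)$, a face of $P$. This settles the equivalence.

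For uniqueness, suppose $F = \sum_i F_i = \sum_i F_i'$ are two admissible decompositions, with $F_i = S(P_i;c)$ and $F_i' = S(P_i;c')$ for common functionals $c$ and $c'$. By the equivalence just proved, $S(P;c) = F = S(P;c')$, so every point of $F$ maximizes $c'$ over $P$; in other words $c'$ is constant on $F$ with value $\max_P c'$. I would then fix an index $i$ and an arbitrary $z \in F_i$, and complete it to $x = z + \sum_{l \neq i} x_l \in F$ by choosing any $x_l \in F_l$. Since $c'$ is constant on $F$, and using $\max_P c' = \sum_l \max_{P_l} c'$, I get $c'^\top z = \max_{P_i} c' + \sum_{l \neq i}\bigl(\max_{P_l} c' - c'^\top x_l\bigr) \ge \max_{P_i} c'$, because each $c'^\top x_l \le \max_{P_l} c'$; as $z \in P_i$ also forces $c'^\top z \le \max_{P_i} c'$, we obtain $c'^\top z = \max_{P_i} c'$, i.e. $z \in S(P_i;c') = F_i'$. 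Hence $F_i \subseteq F_i'$, and the symmetric argument gives equality.

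I expect the characterization half to be entirely routine once the maximization identity is in hand; the real obstacle is uniqueness. The subtle point there is that the functional $c'$ realizing the \emph{second} decomposition, being constant on $F$, can be used to ``read off'' each summand $F_i'$ of the \emph{first} one via the squeeze $\max_{P_i} c' \le c'^\top z \le \max_{P_i} c'$. I would take care to confirm that $F$ and the faces $F_l$ are nonempty (guaranteed by compactness of the $P_l$) so that the completion $x \in F$ exists, and that the argument uses only membership $x_l \in F_l$, not a maximizing choice, which is what keeps it short and symmetric.
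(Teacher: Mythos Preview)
The paper does not supply its own proof of this theorem: it is quoted verbatim from \cite{gritzman.sturmfels:1993} as an external tool and is used, not proved, in the arguments for Lemma~\ref{lm:marg.pol} and Theorem~\ref{thm:mle}. So there is no proof in the paper to compare your attempt against.

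That said, your argument is correct and is essentially the classical one. The key identity $S(P;c)=\sum_i S(P_i;c)$ is exactly what drives both directions of the characterization, and your uniqueness argument via the squeeze $c'^\top z \le \max_{P_i} c'$ combined with $c'^\top x = \sum_l \max_{P_l} c'$ is the standard trick. The only comments are cosmetic: in the ``$\subseteq$'' half of the identity you might say explicitly that a representation $x=\sum_i x_i$ with $x_i\in P_i$ exists by definition of the Minkowski sum; and in the uniqueness step, the nonemptiness of each $F_l$ (needed to complete $z$ to a point of $F$) follows from compactness of $P_l$, which you do note at the end.
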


The marginal polytope $P_A$ and the set $S_A$ are closely related. Indeed, as shown in \cite{morton:2008},  $S_A$ and $P_A$ are homeomorphic.
\begin{theorem}\label{thm:moment.map}
\citep{morton:2008}. The map $\mu \colon S_A \rightarrow P_A$ given by
$
p \mapsto A p
$
is a homemorphism.
\end{theorem}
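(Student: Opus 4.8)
The plan is to exploit the fact that $S_A$ is compact while $P_A$ is Hausdorff, so that it suffices to prove $\mu$ is a continuous bijection; the inverse is then automatically continuous. Continuity of $\mu$ is immediate, since it is the restriction to $S_A$ of the linear map $p \mapsto Ap$. Compactness of $S_A$ follows because $S_A = V_A \cap D_n$ is a closed subset of the compact set $D_n$, a product of simplices. Thus the entire content reduces to showing that $\mu$ carries $S_A$ bijectively onto $P_A$.

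For surjectivity I would first treat the relative interior. Restricted to the open model $M_A$, the map $\mu$ is exactly the mean-value (sufficient-statistic) map of the exponential family with design matrix $A$; by the standard theory of exponential families, its image is precisely $\mathrm{relint}(P_A)$. Since $S_A = \overline{M_A}$ and $\mu$ is continuous with $S_A$ compact, $\mu(S_A)$ is compact, hence closed, and contains $\mu(M_A) = \mathrm{relint}(P_A)$; therefore $\mu(S_A) \supseteq \overline{\mathrm{relint}(P_A)} = P_A$. As the reverse inclusion $\mu(S_A) \subseteq A(D_n) = P_A$ is clear from $S_A \subseteq D_n$, surjectivity follows. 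The same exponential-family reasoning gives injectivity on the interior: if $p,q \in M_A$ with $Ap = Aq$, then strict convexity of the log-partition function (positive-definiteness of the Fisher information, modulo the lineality directions) forces $p = q$.

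The principal obstacle is injectivity on the boundary $S_A \setminus M_A$, where coordinates of $p$ vanish. Here I would use the Minkowski-sum decomposition $P_A = \sum_{i<j} \mathrm{conv}(A_{i,j})$ together with the face structure of Theorem~\ref{thm:GS:93}. Given $t \in P_A$, let $F$ be the unique face of $P_A$ with $t \in \mathrm{relint}(F)$, and write $F = \sum_{i<j} F_{i,j}$ with $F_{i,j} = S(\mathrm{conv}(A_{i,j}); c)$ for a common $c$. For any $p \in D_n$ with $Ap = t$, maximality of $c^\top t$ forces each block $A_{i,j} p_{i,j}$ to lie in $F_{i,j}$, so $p_{i,j}$ is supported only on those columns of $A_{i,j}$ selected by $F_{i,j}$. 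This confines every preimage of $t$ to a single coordinate stratum of $S_A$, on which $\mu$ restricts to the mean-value map of the smaller log-linear model determined by the active columns, with $t$ lying in the relative interior of that submodel's marginal polytope $F$. Applying the interior injectivity result to this face-submodel shows the preimage is unique, which completes the proof of bijectivity, and hence, by the compactness reduction, of the homeomorphism.

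The delicate points to check carefully are that the coordinate strata of $S_A$ are in inclusion-preserving bijection with the faces of $P_A$, so that the restriction to each stratum is genuinely the full-support model of a face-submatrix whose marginal polytope is $F$, and that the exponential-family image and injectivity results apply verbatim to these reduced models despite the per-dyad normalization and the identifiability constraints on the parameters $\zeta$.
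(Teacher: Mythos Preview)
The paper does not supply a proof of this theorem: it is attributed to \cite{morton:2008} and merely remarked to be an extension of the classical moment map theorem for homogeneous toric varieties to the multi-homogeneous (product-multinomial) setting. There is therefore no in-paper argument to compare yours against line by line. The only place the paper touches the internals of the proof is in the proof of Theorem~\ref{thm:mle}, where it appeals to ``inspection of the proof'' of the moment map theorem (and to the appendix of \cite{gms:06}) to conclude that the support of $\mu^{-1}(t)$ equals the relevant facial set---which is precisely the boundary-stratification step you carry out.

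Your approach is the standard one for such results and is essentially correct. The compactness reduction is clean; the identification of $\mu|_{M_A}$ with the mean-value map of the exponential family, together with strict concavity of the log-likelihood, handles surjectivity onto $\mathrm{ri}(P_A)$ and injectivity on the interior. Your boundary argument is also right: for $t\in\mathrm{ri}(F)$ with $F=\sum_{i<j}F_{i,j}$ as in Theorem~\ref{thm:GS:93}, the inequality $c^\top A_{i,j}p_{i,j}\le \max_{q\in\Delta_3}c^\top A_{i,j}q$ summed over dyads must be tight, forcing each $p_{i,j}$ onto the columns indexing $F_{i,j}$ (here you are tacitly using the affine independence of the columns of each $A_{i,j}$, which the paper records). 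This pins every preimage to a fixed support, and $F$ is then literally the marginal polytope of the sub-model on that support, so interior uniqueness applies.

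The one point worth tightening is the identification of the stratum $\{p\in S_A:\mathrm{supp}(p)=\mathcal F\}$ with the open model $M_{A_{\mathcal F}}$ of the submatrix. This follows because $S_A$ is cut out in $D_n$ by the binomials in $I_A$; setting the coordinates outside $\mathcal F$ to zero kills every binomial involving them and leaves exactly $I_{A_{\mathcal F}}$ on the survivors, while each dyadic sum-to-one constraint persists since every $F_{i,j}$ is nonempty. That is the content of your final caveat, and once stated it closes the argument.
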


 This result is a generalization of the moment map theorem for toric varieties defined by homogeneous toric ideals  \citep[see][]{fulton:1993} and \cite{ewald:1996} to the  multi-homogeneous case.  In particular, it implies that $P_A = \{ t \colon t = A p, p \in S_A\}$.

%%%%%%%%%%%%%%%%%%%%%%%%%%%%%%%%%%%%%%%%%%%%%%%%%%%%%%%%%%%%%%%%%%%%%%%%%%%%%%%%%%%%
\subsection{Existence of the MLE in $p_1$ models}

We focus on two important problems related to maximum likelihood estimation in $p_1$ models that have not been thoroughly investigated in both theory and practice. Our results should be compared, in particular, with the ones given in \cite{haberman:77}. The geometric properties of $S_A$ and $P_A$ established above are fundamental to our analysis.
 
We begin with a result justifying the name for the marginal polytope.
\begin{lemma}\label{lm:marg.pol}
The polytope $P_A$ is the convex hull of the set of all possible observable marginals; in symbols:
\[
P_A = \mathrm{conv}(\{ t \colon t = Ax, x \in \mathcal{X}_n \}).
\]
\end{lemma}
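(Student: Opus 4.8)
The plan is to exploit the product structure of $D_n$ and the linearity of the map $p \mapsto Ap$. First I would observe that $D_n = \prod_{i<j}\Delta_3$ is a Cartesian product of standard simplices, embedded block-by-block in $\mathbb{R}^{2n(n-1)}$, and is therefore itself a polytope. Its vertex set is exactly the collection of tuples $(v_{1,2}, v_{1,3}, \ldots, v_{n-1,n})$ in which each block $v_{i,j}$ is a vertex of $\Delta_3$. Since $\Delta_3$ has precisely four vertices — the standard basis vectors of $\mathbb{R}^4$ indexing the configurations $(0,0), (1,0), (0,1), (1,1)$ — and since by definition every $x \in \mathcal{X}_n$ is exactly such a tuple of vertices, this identifies the vertex set of $D_n$ with the sample space; in symbols $\mathrm{vert}(D_n) = \mathcal{X}_n$.

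The second ingredient is the standard fact from polytope theory (see \cite{ZIE:95}) that the image of a polytope under a linear map is again a polytope and, moreover, equals the convex hull of the images of its vertices: $A(\mathrm{conv}(V)) = \mathrm{conv}(A(V))$ whenever $V$ is the vertex set of the polytope. Applying this with $P_A = A(D_n)$ and $V = \mathrm{vert}(D_n) = \mathcal{X}_n$ yields
\[
P_A = A(D_n) = \mathrm{conv}(\{ Av \colon v \in \mathrm{vert}(D_n) \}) = \mathrm{conv}(\{ Ax \colon x \in \mathcal{X}_n \}),
\]
which is precisely the asserted equality.

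There is little genuine difficulty in this statement; the content is simply the recognition that the continuous ``probability'' description $D_n$ and the combinatorial ``observable network'' description $\mathcal{X}_n$ stand in the relation of a polytope to its vertices, so that the linearity of $A$ transports convex hulls. The one point deserving a word of care is the vertex description of a product polytope, namely that a point of $\prod_{i<j}\Delta_3$ is extreme if and only if each of its blocks is extreme in the corresponding $\Delta_3$. This follows because any convex combination in the product is a blockwise convex combination, so a point fails to be a vertex exactly when one of its blocks does; thus I expect no real obstacle beyond making this identification explicit.
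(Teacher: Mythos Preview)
Your proof is correct, and it is more direct than the paper's. You treat $D_n=\prod_{i<j}\Delta_3$ as a single polytope, identify its vertex set with $\mathcal{X}_n$, and push forward under the linear map $A$; this requires only the elementary facts that vertices of a Cartesian product are tuples of vertices and that $A(\mathrm{conv}(V))=\mathrm{conv}(A(V))$. The paper instead works through the Minkowski-sum description $P_A=\sum_{i<j}\mathrm{conv}(A_{i,j})$ and invokes the Gritzmann--Sturmfels decomposition of faces (Theorem~\ref{thm:GS:93}) to argue that every extreme point of $P_A$ is a sum of columns of the blocks $A_{i,j}$, hence of the form $Ax$ with $x\in\mathcal{X}_n$; affine independence of the columns of each $A_{i,j}$ is used to ensure those columns are exactly the vertices of $\mathrm{conv}(A_{i,j})$. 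The two arguments are essentially dual views of the same structure (a Minkowski sum of simplices is the linear image of their Cartesian product), but the paper's route has the side benefit of rehearsing Theorem~\ref{thm:GS:93}, which is the workhorse in the subsequent proof of Theorem~\ref{thm:mle}. Your route, by contrast, is self-contained and avoids that extra machinery altogether for this lemma.
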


\begin{proof}
If $t = Ax$ for some  $x \in \mathcal{X}_n$, then, by definition, $t$ is in the Minkowski sum of $\mathrm{conv}(A_{i,j})$'s for all $i<j$, yielding that $\mathrm{conv}(\{ t \colon t = Ax, x \in \mathcal{X}_n \})$ is a subset of $P_A$. Conversely, by theorem \ref{thm:GS:93}, any extreme point $v$ of $P_A$ can be written as $v = v_{1,2} + v_{1,3} + \ldots + v_{n-1,n}$, where each $v_{i,j}$ is an extreme point of $\mathrm{conv}(A_{i,j})$ such that $v_{i,j} = S(\mathrm{conv}(A_{i,j});c)$ for some vector $c$ and all $i < j$. Since, for every $i \neq j$, the columns of $A_{i,j}$ are affinely independent, they are also the extreme points of $\mathrm{conv}(A_{i,j})$. Therefore, the extreme points of $P_A$ are a subset of $\{ t \colon t = Ax, x \in \mathcal{X}_n \}$, which implies that $P_A$ is a subset of $\mathrm{conv}(\{ t \colon t = Ax, x \in \mathcal{X}_n \})$. 
\end{proof}

Let $x$ denote the observed network. From standard statistical theory of exponential families \cite[see, e.g.,][]{BN,brown:86,lau:96}, the MLE $\hat{p}$ exists if and only if the vector of margins $t = Ax$ belongs to the relative interior of $P_A$, denoted by $\mathrm{ri}(P_A)$. Furthermore, it is always unique and satisfies the {\it moment equations,} which we formulate using the moment map of Theorem \ref{thm:moment.map} as 
\begin{equation}\label{eq:mle}
\hat{p} = \mu^{-1}(t).
\end{equation}

When the MLE does  not exists, the likelihood function still achieves its supremum at a unique point, which also satisfies the moment equations \eqref{eq:mle}, and is called the {\it extended MLE}. The extended MLE is a point on the boundary of $S_A$ and, therefore, contains zero coordinates. Although not easily interpretable in terms of the model parameters of the Holland and Leinhardt likelihood equations \eqref{eq:hl.eqs}, the extended MLE still provides a perfectly valid probabilistic representation of a network, the only difference being that certain network  configurations have a zero probability of occurring.

We are concerned with the following two problems that are essential for computing the MLE and extended MLE:
\begin{enumerate}
\item  decide whether the MLE exists, i.e. whether observed vector of margins $t$ belongs to $\mathrm{ri}(P_A)$, the relative interior of the marginal polytope; 
\item compute  $\mathrm{supp}(\hat{p})$, the support of $\hat{p}$, where $\mathrm{supp}(x) = \{ i \colon x_i \neq 0\}$. Clearly, this second task is nontrivial only when $t$ is a point on the relative boundary of $P_A$, denoted by $\mathrm{rb}(P_A)$, and we are interested in the extended MLE.
\end{enumerate}

Both problems require handling the geometric and combinatorial properties of the marginal polytope $P_A$ and of its faces, and to relate any point in $\mathrm{rb}(P_A)$ to the boundary of $S_A$. In general, this is challenging, as the combinatorial complexity of Minkowki sums could  be very high. Fortunately, we can simplify these tasks by resorting to a larger polyhedron with which it is easier to work.

Let $C_A = \mathrm{cone}(A)$ be the { marginal cone}  \cite[see][]{mle1:06}, which is easily seen to be the convex hull of all the possible observable sufficient statistics if there were no constraints on the number of possible observations per dyad. Notice that, since the columns of $A$ are affinely independent, they define the extreme rays of $C_A$. Following \cite{gms:06}, a set $\mathcal{F} \subseteq \{1,2,\ldots,2n(n-1) \}$ is called a \emph{facial set} of $C_A$ if there exists a $c$ such that 
\[
c^\top a_i = 0, \;\;\; \forall i \in \mathcal{F} \quad \text{and} \quad c^\top a_i < 0, \;\;\;  \forall i \not \in \mathcal{F},
\]
where $a_i$ indicates the $i$-th column of $A$.  Then, $F$ is face of $C_A$ if and only if $F = \mathrm{cone}(\{a_i \colon i \in \mathcal{F}\})$, for some facial set $\mathcal{F}$ of $C_A$, and that there is a one-to-one correspondence between the facial sets and the faces of $C_A$.

In our main result of this section, we show that the existence of the MLE can be determined using the simpler set $C_A$ and that the supports of the points on the boundary of $S_A$ are facial sets of $C_A$. 

\begin{theorem}\label{thm:mle}
Let $t \in P_A$. Then, $t \in \mathrm{ri}(P_A)$ if and only if $t \in \mathrm{ri}(C_A)$.
Furthermore, for every face $G$ of $P_A$ there exists one facial set $\mathcal{F}$ of $C_A$ such that $\mathcal{F} = \mathrm{supp}(p)$, where $p = \mu^{-1}(t)$, for each $t \in \mathrm{ri}(G)$.
\end{theorem}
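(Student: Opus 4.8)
The plan is to exploit one special feature of the design matrix $A$: in the Holland--Leinhardt parametrization \eqref{eq:hl.eqs} the coefficient of $\lambda_{i,j}$ equals $1$ in every one of the four probabilities of dyad $(i,j)$ and $0$ in all others. Thus each column $a_k$ of $A$ has exactly one nonzero entry among the $\lambda$-rows, namely a $1$ in the row $\lambda_{i,j}$ of its own dyad. Writing $k\sim(i,j)$ to mean that column $k$ belongs to dyad $(i,j)$, reading off the $\lambda_{i,j}$-coordinate of a point $t=\sum_k c_k a_k$ therefore returns the total weight $\sum_{k\sim(i,j)} c_k$ it places on the columns of that dyad. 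In particular a point $t \in C_A$ lies in $P_A$ if and only if all of these dyadwise totals equal $1$; equivalently, $P_A = C_A \cap \{t : t_{\lambda_{i,j}}=1 \text{ for all } i<j\}$. This identity is what converts statements about the cone $C_A$ into statements about the polytope $P_A$, and vice versa.

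For the first assertion I would use two standard facts about relative interiors: for a finitely generated cone, $\mathrm{ri}(C_A)=\{\sum_k c_k a_k : c_k>0\}$, and for a Minkowski sum, $\mathrm{ri}(P_A)=\sum_{i<j}\mathrm{ri}(\mathrm{conv}(A_{i,j}))$. Since the four columns of each $A_{i,j}$ are affinely independent, $\mathrm{ri}(\mathrm{conv}(A_{i,j}))$ consists precisely of the strictly positive convex combinations of those four columns. Now take $t \in P_A$, so its dyadwise totals are all $1$. If $t\in\mathrm{ri}(C_A)$, write $t=\sum_k c_k a_k$ with every $c_k>0$; the dyad-$(i,j)$ block of these coefficients is then a strictly positive vector summing to $1$, so $t$ is a sum over dyads of points of $\mathrm{ri}(\mathrm{conv}(A_{i,j}))$ and hence $t\in\mathrm{ri}(P_A)$. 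Conversely a point of $\mathrm{ri}(P_A)$ is, by the Minkowski description, a sum of strictly positive convex combinations, which assembles into a single positive combination of all columns, so $t\in\mathrm{ri}(C_A)$. This proves the equivalence.

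For the second assertion, fix a face $G$ of $P_A$ and apply Theorem~\ref{thm:GS:93}: $G=\sum_{i<j}G_{i,j}$ with $G_{i,j}=S(\mathrm{conv}(A_{i,j});c)$ for a common $c\in\mathbb{R}^d$. As $\mathrm{conv}(A_{i,j})$ is a simplex, $G_{i,j}$ is the convex hull of the subset $\mathcal{F}_{i,j}$ of its columns on which $c$ attains its maximum $m_{i,j}:=\max_{k\sim(i,j)} c^\top a_k$. Set $\mathcal{F}=\bigcup_{i<j}\mathcal{F}_{i,j}$ and let $c'=c-\sum_{i<j} m_{i,j}u_{i,j}$, where $u_{i,j}\in\mathbb{R}^d$ is the indicator of the row $\lambda_{i,j}$. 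Using that $a_k$ has $\lambda$-coordinate $1$ only in its own dyad, one computes $c'^\top a_k = c^\top a_k - m_{i,j}$ for $k\sim(i,j)$, which is $0$ exactly when $k\in\mathcal{F}_{i,j}$ and strictly negative otherwise. Hence $c'$ witnesses that $\mathcal{F}$ is a facial set of $C_A$. Moreover, since $u_{i,j}^\top y=y_{\lambda_{i,j}}=1$ for every $y\in P_A$, the functional $c'$ differs from $c$ by the constant $\sum_{i<j}m_{i,j}$ on $P_A$; therefore $c'$ is maximized on $P_A$ exactly on $G$, with maximal value $0$. Consequently $c'^\top t=0$ for every $t\in G$. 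Writing $t=Ap=\sum_k p_k a_k$ with $p=\mu^{-1}(t)\ge 0$ gives $0=\sum_k p_k(c'^\top a_k)$, a sum of nonpositive terms, forcing $p_k=0$ whenever $k\notin\mathcal{F}$. This establishes $\mathrm{supp}(p)\subseteq\mathcal{F}$.

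It remains to prove the reverse inclusion for $t\in\mathrm{ri}(G)$, and this is the step I expect to be the main obstacle. Repeating the argument of the first assertion verbatim for the sub-configuration indexed by $\mathcal{F}$ yields a face-level version: for $t$ with unit dyad totals, $t\in\mathrm{ri}(G)$ if and only if $t$ lies in the relative interior of the face $F_{\mathcal{F}}=\mathrm{cone}(\{a_k:k\in\mathcal{F}\})$ of $C_A$. The difficulty is that membership of $t$ in $\mathrm{ri}(F_{\mathcal{F}})$ only guarantees the existence of \emph{some} representation of $t$ with full $\mathcal{F}$-support, whereas I must show that the particular preimage $p=\mu^{-1}(t)\in S_A$ has support exactly $\mathcal{F}$; because the columns satisfy linear relations (these are precisely the Markov and Graver moves of Section~\ref{sec:markov}), the representation of $t$ is typically not unique and the inclusion does not follow formally. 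The essential extra input is that $S_A$ is the nonnegative part of the toric variety $V_A$, not merely an arbitrary set of nonnegative solutions: for such points the support is always a facial set of $C_A$, which is the content of the boundary analysis of \cite{gms:06} and is compatible with the stratification underlying the moment-map homeomorphism of Theorem~\ref{thm:moment.map}. Granting this, set $\mathcal{F}'=\mathrm{supp}(p)$, a facial set with $\mathcal{F}'\subseteq\mathcal{F}$. Then $t=\sum_{k\in\mathcal{F}'}p_k a_k$ with all $p_k>0$ places $t$ in $\mathrm{ri}(F_{\mathcal{F}'})$, while the face-level equivalence places $t$ in $\mathrm{ri}(F_{\mathcal{F}})$. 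Since a polyhedral cone is partitioned into the relative interiors of its faces, $t$ lies in the relative interior of a unique face, so $F_{\mathcal{F}'}=F_{\mathcal{F}}$ and hence $\mathcal{F}'=\mathcal{F}$. This gives $\mathrm{supp}(\mu^{-1}(t))=\mathcal{F}$ for all $t\in\mathrm{ri}(G)$, as required.
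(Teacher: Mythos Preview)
Your argument is correct and follows essentially the same route as the paper: both hinge on the $\lambda$-rows of $A$ (the paper's features (i)--(ii)), on the Gritzmann--Sturmfels decomposition of faces of $P_A$, and on the fact from \cite{gms:06}/\cite{morton:2008} that supports of points in $S_A$ are facial sets of $C_A$. Your explicit construction of the shifted functional $c' = c - \sum_{i<j} m_{i,j}u_{i,j}$ is a nice addition that the paper leaves implicit, and your final uniqueness argument via the partition of $C_A$ into relative interiors of its faces makes precise what the paper abbreviates as ``inspection of the proof'' of Theorem~\ref{thm:moment.map}.
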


\begin{proof}
By Lemma \ref{lm:marg.pol}, $P_A \subset C_A$. Thus, since both $P_A$ and $C_A$ are closed, $t \in \mathrm{ri}(P_A)$ implies $t \in \mathrm{ri}(C_A)$. 
For the converse statement, suppose $t$ belongs to a proper face $G$ of $P_A$. Then, by Theorem \ref{thm:GS:93},
\begin{equation}\label{eq:t}
t = t_{1,2} + t_{1,3} + \ldots + t_{n-1,n}
\end{equation}
where $t_{i,j}$ belongs to a face of $\mathrm{conv}(A_{i,j})$, for each $i < j$. We now point out two crucial features of $P_A$ that can be readily checked:
\begin{enumerate}
\item[(i)] the first ${n \choose 2}$ coordinates of any point in $P_A$ are all equal to $1$;
\item[(ii)] the first ${n \choose 2}$ coordinates of each $t_{i,j}$ are all zeros except one, which takes on the value  $1$.
\end{enumerate}
Suppose now that $t \in \mathrm{ri}(C_A)$. Because of (i) and (ii), there exists a point in $x \in D_n$ with strictly positive entries such that $t = A x$. In turn, this implies that
\[
t = t'_{1,2} + t'_{1,3} + \ldots + t'_{n-1,n},
\]
where $t'_{i,j} \in \mathrm{ri}(\mathrm{conv}(A_{i,j}))$ for each $i < j$, which contradicts \eqref{eq:t}. Thus, $t \in \mathrm{rb}(C_A)$.
To prove the second claim, notice that, because of the uniqueness of the decomposition in Theorem \ref{thm:GS:93}, our arguments further yield that, for every proper face $G$ of $P_A$, there exists one face $F$ of $C_A$ such that $\mathrm{ri}(G) \subseteq \mathrm{ri}(F)$. Consequently, to every face $G$ of $P_A$  corresponds one facial set $\mathcal{F}$ of $C_A$ such that, if $t \in \mathrm{ri}(G)$, then
\begin{equation}\label{eq:p}
t = Ap
\end{equation}
for some $p \in D_n$ with $\mathrm{supp}(p) = \mathcal{F}$. Since columns of $A$ are affinely independent, this $p$ is unique.  By Theorem \ref{thm:moment.map}, there exists a unique point $p' = \mu^{-1}(t) \in S_A$ satisfying \eqref{eq:p}, so that $\mathrm{supp}(p') \subseteq \mathcal{F}$. Inspection of the proof of the same theorem yields that, in fact, $\mathrm{supp}(p')= \mathcal{F}$, so that $p'=p$. See also the appendix in \cite{gms:06}.
\end{proof}

From the algorithmic standpoint, the previous theorem  allows us to work directly with $C_A$, whose face lattice is much simpler.  For example, while we know the extreme rays of $C_A$, it is highly nontrivial to find the vertices of $P_A$ among $4^{n(n-1)}$ vectors of observable sufficient statistics.
Algorithms for deciding whether $t \in \mathrm{ri}(C_A)$ and for finding the facial set corresponding to the face $F$ of $C_A$ such that $t \in \mathrm{ri}(F)$ are presented in \cite{mle1:06} and \cite{mle2}. 

We conclude this section with a statistical remark.  Using the terminology of log-linear modeling  theory  \citep[see, e.g.,][]{bfh:07}) $p_1$ models are log-linear models arising from a  {\it product-Multinomial sampling scheme.} This additional constraint is what makes dealing with $P_A$ \citep[and, as we showed in][computing the Markov bases]{petr:rina:fien:2009}  particularly complicated in comparison with the same tasks under Poisson or Multinomial sampling scheme. In these simpler sampling settings, the toric ideal $I_A$ is homogenous, the model $S_A$ is homoemorphic to $C_A$ for the Poisson scheme, and to $\mathrm{conv}(A)$ for the Multinomial scheme and all Markov moves are applicable (provided that their degree is smaller than $\sum_i x_i$ for the Multinomial scheme).

\subsection{Computations}
\label{sec:computations}
We now briefly summarize some of the computations involving $P_A$ and $C_A$ we carried out based on {\tt polymake} \citep[see][]{polymake}) and {\tt minksum} \citep[see][]{minksum}. %and our implementation of the iterative proportional scaling algorithms for fitting $p_1$ models described in \cite{holl:lein:1981}. 
Even though, due to the high computational complexity, this can only be a rather limited account, we believe it   provides some indications of the complexity of $p_1$ models and of the subtleties of the  maximum likelihood estimation problem.

We first indicate how non-existence of the MLE and the determination of the appropriate facial set can be addressed using simple linear programming. While checking for the existence of the MLE is relatively inexpensive, the second task is more demanding. We refer the interested readers to \cite{mle2} for further details and more efficient algorithms.
In order to decide whether the MLE exists it is sufficient to establish whether the observed sufficient statistics $t$ belong to the relative interior of $P_A$, which, by Theorem \ref{thm:mle}, happens if and only if it belongs to the relative interior of $C_A$. In turn, we can decide this by solving the following simple linear program:
\[
\begin{array}{rc}
& \max s  \\
\mathrm{s.t.} &  A x = t\\
& x_i - s \geq 0\\
& s \geq 0,\\
\end{array}
\]
where the scalar $s$ and vector $x$ are the variables. At the optimum $(s^*,x^*)$, the MLE exists if and only if $s^* > 0$. Though very simple, this algorithm may not be sufficient to compute the support of $\hat{p}$ if the MLE does not exist. To this end, we need to resort to a more sophisticated algorithm.  Let $a_i$ denote the $i$-th column of $A$, and consider the following $2 n (n-1)$ programs, one for each column of $A$:
\[
\begin{array}{rc}
& \max \langle a_i, y \rangle  \\
\mathrm{s.t.} &  y^\top t = 0\\
 &  A^\top y \geq 0\\
& -1 \leq y \leq 1
\end{array}
\]
Let $y^*_i$ denote the solution to the $i$-th program (notice that $y^*_i$ is a vector). Then,  the MLE does not exist if and only if  $\langle a_i, y^*i \rangle > 0$ for some $i$, in which case the facial set associated with $t$ is given by
\[
\{ i \colon \langle f_i, y^*_i\rangle = 0 \}.
\]

We now provide some numerical evidence on how Theorem \ref{thm:mle} significantly simplifies both tasks. Table \ref{tab:P_A} displays the number of vertices of the polytopes $P_A$ for the three model specifications described in Section~\ref{sec:p1.model} and various networks sizes. The last column of the table contains the number of columns of the design matrix, which is also the number of extreme rays of the marginal cone $C_A$. In comparison, the number of vertices of $P_A$ is very hard to compute; it is very large and grows extremely fast with $n$.

\begin{table}\label{tab:P_A}
\begin{center}
\begin{tabular}[t]{|c|c|c|c|c|}
\hline
$n$ & $\rho_{i,j}=0$ & $\rho_{i,j}=\rho$ & $\rho = \rho_i + \rho_j$ & $2 n (n-1)$\\
\hline
\hline
3 & 62 & 62 & 62 & 12\\
4 & 1,862 & 2,415 & 3,086 & 24\\
5 & 88,232 & 158,072 & 347,032 & 40\\
\hline
%7& & & & & & & & &\\
%8 & & & & & & & & &\\
%9 & & & & & & & & &\\
\end{tabular}
\end{center}
\caption{Number of vertices for the polytopes $P_A$ for different specifications of the $p_1$ model and different network sizes. Computations carried out using {\tt minksum} \cite{minksum}. The last column indicates the number of columns of the design matrix $A$, which correspond to the number of generators of $C_A$.}
\end{table}

In Table \ref{tab:C_A} we report the number of facets, dimensions and ambient dimensions of the cones $C_A$ for different values of $n$ and for the three specification of the reciprocity parameters $\rho_{i,j}$ we consider here. Though this only provides an indirect measure of the complexity of these models and of the non-zero patterns in extended MLEs, it does show how quickly the complexity of $p_1$ models may scale with the network size $n$. 

\begin{table}\label{tab:C_A}
\begin{center}
\begin{tabular}[t]{|c|ccc|ccc|ccc|}
\hline
$n$ & \multicolumn{3}{|c|}{$\rho_{i,j}=0$} & \multicolumn{3}{|c|}{$\rho_{i,j}=\rho$} & \multicolumn{3}{|c|}{$\rho = \rho_i + \rho_j$}\\
\hline
& Facets & Dim. & Ambient Dim. & Facets &  Dim. & Ambient Dim. & Facets &  Dim. & Ambient Dim.\\
\hline
3 & 30 & 7 & 9 & 56 & 8 & 10 & 15 & 10 & 13\\
4 & 132 & 12& 14& 348 & 13 & 15 & 148 & 16 & 19\\
5 & 660 & 18 & 20 & 3032 & 19 & 21 & 1775 & 23 & 26\\
6 & 3181 & 25 & 27 & 94337 & 26 & 28 & 57527 & 31 & 34\\
\hline
%7& & & & & & & & &\\
%8 & & & & & & & & &\\
%9 & & & & & & & & &\\
\end{tabular}
\end{center}
\caption{Number of facets, dimensions and ambient dimensions of the the cones $C_A$ for different specifications of the $p_1$ model and different network sizes.}
\end{table}

%Let $A_0$ and $A_+$ denote the sub-matrices of $A$ obtained 
%Let $f = $
%\[
%\begin{array}{rc}
%& \max \langle f, y\rangle  \\
%\mathrm{s.t.} &  B_+^\top y = 0\\
% &  B_0^\top y \geq 0\\
%& -1 \leq y \leq 1
%\end{array}
%\]
%where $f = B_0 1$. OK, this may not give the correct facial set. It would if we were to perform the following LPs
%\[
%\begin{array}{rc}
%& \max \langle f_i, y_i\rangle  \\
%\mathrm{s.t.} &  B_+^\top y_i = 0\\
% &  B_0^\top y \geq 0\\
%& -1 \leq y_i \leq 1
%\end{array}
%\]
%where $f_i$ is the $i$-th column of $B_0$, $i \in \mathcal{I}_0$. Then, if  $\langle f_i, y_i\rangle \geq 0$ for all $i$ with strict inequality for some $i$, the MLE does not exist and the co-facial set is
%\[
%i \colon \langle f_i, y_i\rangle > 0.
%\]

Another point of interest is the assessment of how often the MLE exists. In fact, because of the product Multinomial sampling constraint, nonexistence of the MLE is quite severe, especially for smaller networks. 
%However, in light of the following results, we are led to conjecture that the larger the network the higher the chance that the MLE t exists.   
Below we report our finding:
\begin{enumerate}
\item  $n=3$.\\
The sample space consists of $4^3 = 64$ possible networks. When $\rho_{i,j} = 0$ for all $i$ and $j$, there are  $63$ different observable sufficient statistics, only one of which belongs to $\mathrm{ri}(P_A)$. Thus, only one of the $63$ observable sufficient statistics leads to the existence of the  MLE. The corresponding fiber contains only  two networks: \verb=0 0 1 0 0 1 0 0 0 0 1 0= and \verb=0 1 0 0 0 0 1 0 0 1 0 0=, which encode the incidence matrices
\[
\left[ 
\begin{array}{ccc}
\times & 0 & $1$\\
$1$ & \times & $0$\\
$0$ & $1$ & \times
\end{array}
\right] \quad \text{and} \quad 
\left[ 
\begin{array}{ccc}
\times & $1$ & 0\\
$0$ & \times & $1$\\
$1$ & $0$ & \times
\end{array}
\right],
\]
respectively. In both cases, the associated MLE is the $12$-dimensional vector with entries all equal to $0.25$.
Incidentally, the marginal polytope $P_A$ has $62$ vertices and $30$ facets.
When $\rho_{i,j} = \rho \neq 0$ or $\rho_{i,j} = \rho_i + \rho_j$ the MLE never exists. 
\item  $n=4$. \\
The sample space contains $4096$ observable networks. If $\rho_{i,j}=0$, there are $2,656$ different observable sufficient statistics, only $64$ of which yield existent MLEs. Overall, out of the $4096$ possible networks, only $426$ have MLEs.
When $\rho_{i,j}=\rho \neq 0$, there are $3,150$ different observable sufficient statistics, only $48$ of which yield existent MLEs. Overall, out of the $4096$ possible networks, only $96$ have MLEs. When $\rho_{i,j} = \rho_i + \rho_j$, there are $3150$ different observable sufficient statistics and the MLE never exists.
\item  $n=5$. \\
The sample space consists of  $4^{10} = 1,048,576$ different networks. If $\rho_{i,j}=0$, there are $225,025$ different sufficient statistics, and the MLE exists for $7,983$. If $\rho_{i,j} = \rho \neq 0$ the number of distinct possible sufficient statistics is $349,500$, and the MLE exists in $12,684$ cases. Finally, when $\rho_{i,j} = \rho_i + \rho_j$, the number of different sufficient statistics is $583,346$ and the MLE never exists.
\end{enumerate}

\subsubsection{The case $\rho = 0$}
When $\rho = 0$, nonexistence of the MLE can be more easily detected. Even though this is the simplest of $p_1$ models we consider, the observations below apply to more complex $p_1$ models as well, since nonexistence of the MLE for the case $\rho = 0$ implies nonexistence of the MLE in all the other cases. Furthermore, by setting $\rho = 0$, we recover the random  Rasch matrix model for exchangeable binary arrays discussed in \cite{lau:08}. Furthermore, the undirected version of this $p_1$ model corresponds to the random graph models with independent dyads and the degree sequence as the vector of sufficient statistics, recently studied by \cite{degree}.

Only for this case, it is more convenient to switch to the parametrization originally used in \cite{holl:lein:1981}, and describe networks using incidence matrices. Thus, each network on $n$ nodes can be represented as a $n \times n$ matrix with $0/1$ off-diagonal entries, where the $(i,j)$ entry is $1$ if there is an edge from $i$ to $j$, and $0$ otherwise. 
In this parametrization, the sufficient statistics for the parameters $\{ \alpha_i, i=1,\ldots,n\}$ and $\{ \beta_j, j=1,\ldots,n\}$ are the row and column sums, respectively. Just like with the other $p_1$ models, the minimal sufficient statistic for the density parameter $\theta$ is the total number of edges, which is a linear function of the sufficient statistics for the other parameters; because of this, it can be ignored.  

When $\rho = 0$, there are three cases where the MLE does not exist. The first two are immediate, while the third one is quite subtle. In order to describe them we recall that, from the standard theory of exponential families \cite[see, e.g.,]{lau:96}), the MLE $\hat{p}$  viewed as a $n \times n$ incidence matrix satisfies the moment equations, namely the row and column sums of $\hat{p}$ match the corresponding row and column sums of the observed network $x$. Thus, the MLE does not exist whenever this constraint cannot be satisfied by any strictly positive vector. As a result, for an observed network $x$, the MLE is non-existent if $x$ contains zeros in certain positions such that there does not exist any vector $z$ with strictly positive coordinates satisfying $A x = A z$.
To this end, we consider the $2n \times n(n-1)$ sub-matrix $B$ of the design matrix $A$ obtained by including only the columns corresponding to the probabilities $p_{i,j}(1,0)$ and $p_{i,j}(0,1)$, for all $i < j$ and only the rows corresponding to the parameters $\{\alpha_i, i=1,\ldots, n\}$ and $\{\beta_j, j=1\ldots, n\}$. For example, when $n=3$,
%To encode these constraint, we represent the off-diagonal entries of the $n \times n$ incidence  matrix $x$ as a $n(n-1)$-dimensional vector, with the coordinates ordered, say,  lexicographically. Let $B$ be a $2n \times n(n-1)$ matrix such that $Bx$ is the vector of the row and column sums of the observed network. For example, when $n=3$, 
 \begin{align*}
   B = \begin{bmatrix} 
     1  &  0    &  1  &  0    &  0  &  0 \\
     0  &  1    &  0  &  0    &  1  &  0  \\
     0  &  0    &  0  &  1   &  0  &  1  \\
     0  &  1    &  0  &  1    &  0  &  0 \\
     1  &  0    &  0  &  0    &  0  &  1  \\
     0  &  0    &  1  &  0    &  1  &  0  
  \end{bmatrix},
\end{align*}
where its columns correspond to the probabilities $p_{1,2}(1,0)$, $p_{1,2}(0,1)$, $p_{1,3}(1,0)$, $p_{1,3}(0,1)$, $p_{2,3}(1,0)$ and $p_{2,3}(0,1)$. %$(1,2)$, $(1,3)$, $(2,1)$, $(2,3)$, $(3,1)$ and $(3,2)$.
Since the columns of $B$ are affinely independent,  in order to determine the patterns of zeros leading to a nonexistent MLE, it is sufficient to look at the facial sets of the pointed polyhedral cone $\mathrm{cone}(B)$.

Using the strategy outlined above, we can distinguish three separate cases for which the MLE does not exist. The first two cases are obvious, since they correspond, respectively, to the maximal and minimal number of $1$'s that can be observed given the dyadic multinomial constraints. 
\begin{enumerate}
\item A row or column sum is equal to $n-1$. If row $i$ sums to $n-1$, then $\hat{p}_{ij}(1,0) = 1$ for all $j$, which implies that $\hat{p}_{ij}(0,1) = \hat{p}_{ij}(0,0) = 0$, for all $j$. Similarly, if column $i$ sums to $n-1$, $\hat{p}_{ij}(0,1) = 1$ for all $j$, which implies that $\hat{p}_{ij}(1,0)  = \hat{p}_{ij}(0,0) = 0$, for all $j$.
\item A row or column sum is $0$. If row $i$ has a zero sum, then  $\hat{p}_{ij}(1,0) = \hat{p}_{ij}(1,1) = 0$, for all $j$. Similarly, if column $i$ has a zero sum, then $\hat{p}_{ij}(0,1) = \hat{p}_{ij}(1,1) = 0$, for all $j$.
\item The last case is much less obvious and does not appear to be captured by a general rule, so we provide two instances of it for the cases $n=3$ and $n=4$.  When $n=3$, it is easy to see that a MLE with positive coordinate cannot exist if any of the following patterns of zeros are observed in any network $x$:
\[
\left[ 
\begin{array}{ccc}
\times & 0 & $\;$\\
0 & \times & $\;$\\
$\;$ & $\;$ & \times
\end{array}
\right], \quad 
\left[ 
\begin{array}{ccc}
\times & $\;$ & 0\\
$\;$ & \times & $\;$\\
0 & $\;$ & \times
\end{array}
\right], \quad
\left[ 
\begin{array}{ccc}
\times & $\;$ & $\;$\\
$\;$ & \times & 0\\
$\;$ &0  & \times
\end{array}
\right].
\]
Notice that the occurrence of a zero in the positions indicated above does not necessarily imply any of the previous two cases.
When $n=4$, there are 4 patterns of zeros leading to a nonexistent MLE, even though the margins can be positive and smaller than $3$:
\[
\left[ 
\begin{array}{cccc}
\times & 0 & $\;$ & 0\\
0 & \times & $\;$ & 0\\
$\;$ & $\;$ & \times & $\;$\\
0 & 0 & $\;$ & \times\\
\end{array}
\right], \quad 
\left[ 
\begin{array}{cccc}
\times & 0 & 0 & $\;$\\
0 & \times & 0 & $\;$\\
0 & 0 & \times & $\;$\\
$\;$ & $\;$ & $\;$ & \times\\
\end{array}
\right],\quad 
\left[ 
\begin{array}{cccc}
\times & $\;$ & 0 & 0\\
$\;$ & \times & $\;$ & $\;$\\
0 & $\;$ & \times & 0\\
0 & $\;$ & 0 & \times\\
\end{array}
\right], \quad
\left[ 
\begin{array}{cccc}
\times & $\;$ & $\;$ & $\;$\\
$\;$ & \times & 0 & 0\\
$\;$ & 0 & \times & 0\\
$\;$ & 0 & 0 & \times\\
\end{array}
\right].
\]
As it turns out, we can  show that that these are the same patterns of zeros that are relevant to the existence of the MLE for the model of quasi-independence for two-way contingency tables and the Rasch model in which the number of subjects equals the number of items.

\end{enumerate} 

Based on our computations  carried out in  {\tt polymake} for networks of size up to $n=10$, we have the following conjecture:

\begin{conjecture}For a network on $n$ nodes, $\mathrm{cone}(B)$ has  $3n$ facets, $2n$ of which correspond to patterns of zeros leading to a zero row or column margin, and the remaining $n$ to patterns of zeros which cause the MLE not to exist without inducing zero margins.\end{conjecture}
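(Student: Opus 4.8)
The plan is to recognize $\mathrm{cone}(B)$ as the \emph{edge cone} of a bipartite graph and to read off its facets from a flow-feasibility characterization. Index the $2n$ rows of $B$ by the symbols $\alpha_1,\dots,\alpha_n,\beta_1,\dots,\beta_n$ and write $e_{\alpha_i},e_{\beta_j}$ for the corresponding standard basis vectors of $\mathbb{R}^{2n}$. With $\rho=0$, the equations \eqref{eq:hl.eqs} show that the column of $B$ attached to $p_{i,j}(1,0)$ is $e_{\alpha_i}+e_{\beta_j}$ and the one attached to $p_{i,j}(0,1)$ is $e_{\alpha_j}+e_{\beta_i}$. Hence the generators of $\mathrm{cone}(B)$ are exactly the vectors $e_{\alpha_i}+e_{\beta_j}$ over all ordered pairs $i\neq j$; that is, $B$ is the vertex-edge incidence matrix of the bipartite graph $G$ appearing in Theorem~\ref{thm:essentialMarkovForModel} (the complete bipartite graph on $\{v_i\}\cup\{w_j\}$ with the diagonal edges $\{v_i,w_i\}$ deleted). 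Writing a point of $\mathbb{R}^{2n}$ as a pair of margin vectors $(r,c)$, with $r$ on the $\alpha$-coordinates and $c$ on the $\beta$-coordinates, every generator contributes one unit to an $\alpha$-coordinate and one to a $\beta$-coordinate, so $\mathrm{cone}(B)$ lies in the hyperplane $H=\{\sum_i r_i=\sum_j c_j\}$. Since $G$ is connected for $n\ge 3$, the incidence matrix has rank $2n-1$, so $\mathrm{cone}(B)$ is full dimensional in $H$ and its facets are precisely its $(2n-2)$-dimensional faces.

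Next I would reformulate membership as a transportation problem. A vector $(r,c)\in H$ with $r,c\ge 0$ lies in $\mathrm{cone}(B)$ if and only if there is a nonnegative array $(x_{ij})_{i\neq j}$ with row sums $r_i$ and column sums $c_j$, i.e. if and only if the transportation problem with supplies $r$, demands $c$ and all off-diagonal arcs is feasible. By the max-flow--min-cut theorem (Gale's feasibility condition), and using the balance $\sum_i r_i=\sum_j c_j$ defining $H$, this holds exactly when $\sum_{j\in T}c_j\le\sum_{i:\,N(i)\cap T\neq\emptyset}r_i$ for every $T\subseteq[n]$, where $N(i)=[n]\setminus\{i\}$ is the neighborhood of node $i$ in $G$. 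The nonnegativity requirements $r_i\ge 0$ and $c_j\ge 0$ are the $2n$ ``zero row/column margin'' constraints, each of which is valid and, for $n\ge 3$, supports a distinct $(2n-2)$-dimensional face.

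The decisive simplification is that for any $T$ with $|T|\ge 2$ every node $i$ satisfies $N(i)\cap T\neq\emptyset$, so the right-hand side equals $\sum_i r_i=\sum_j c_j\ge\sum_{j\in T}c_j$ and the condition is automatic; only the singletons $T=\{k\}$ contribute, giving the $n$ nontrivial inequalities $c_k\le\sum_{i\neq k}r_i$, equivalently $r_k+c_k\le\sum_i r_i$. Therefore $\mathrm{cone}(B)$ is exactly the set of $(r,c)\in H$ satisfying the $2n$ nonnegativity inequalities together with these $n$ inequalities. A direct evaluation shows that the generators $e_{\alpha_i}+e_{\beta_j}$ annihilated by the $k$-th nontrivial functional $\sum_{i\neq k}r_i-c_k$ are precisely those with $i=k$ or $j=k$, i.e. the edges of $G$ incident to node $k$; equivalently, this facet records the vanishing of all edges among the other $n-1$ nodes, a zero pattern that leaves node $k$'s margins free and hence forces no margin to be zero. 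To finish I would certify irredundancy by exhibiting, for each of the $3n$ inequalities, $2n-2$ affinely independent supporting generators (as in the explicit $n=3$ check), which simultaneously shows that the $n$ nontrivial facets are distinct from the $2n$ coordinate facets and that none is implied by the others, yielding exactly $3n$ facets.

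I expect the main obstacle to be this completeness and irredundancy step: making rigorous the collapse of Gale's exponentially many subset conditions to the $n$ singleton conditions, and then certifying that each of the resulting $3n$ inequalities is genuinely facet defining and pairwise distinct for all $n\ge 3$ (so that no nonnegativity facet coincides with a nontrivial facet and the count is exactly $3n$). The flow-feasibility reformulation is what makes this tractable, since it replaces the combinatorics of Minkowski summands by a single, well-understood transportation model; the residual work is the linear-algebraic spanning argument establishing that each hyperplane meets $\mathrm{cone}(B)$ in codimension one. These facets also follow from the general theory of edge cones of bipartite graphs, but the transportation viewpoint keeps the argument self-contained and makes the identification with the described zero patterns transparent.
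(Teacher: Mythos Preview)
The paper does not prove this statement: it is presented as a conjecture, supported only by {\tt polymake} computations for $n\le 10$. Your proposal is therefore not competing with any argument in the paper; if completed it would \emph{resolve} the conjecture.

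Your strategy is correct. The identification of $B$ with the vertex--edge incidence matrix of $K_{n,n}$ minus a perfect matching is right, and Gale's supply--demand theorem (equivalently max-flow--min-cut) yields a complete $\mathcal{H}$-description of $\mathrm{cone}(B)$ inside the balance hyperplane $H=\{\sum_i r_i=\sum_j c_j\}$: nonnegativity of $r$ and $c$ together with the subset conditions $\sum_{j\in T}c_j\le\sum_{i\in N(T)}r_i$. Your collapse of the latter to the $n$ singleton inequalities $c_k\le\sum_{i\ne k}r_i$ is valid, since for $|T|\ge 2$ every supply node has a neighbour in $T$ and the right-hand side becomes $\sum_i r_i$, making the inequality a consequence of balance and $c\ge 0$. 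On $H$ the dual supply-side conditions $r_k\le\sum_{j\ne k}c_j$ are the \emph{same} inequalities, so nothing further appears.

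The step you flag as the obstacle---irredundancy---is in fact routine once you look at the generator sets supported on each hyperplane. For the coordinate face $r_i=0$, the generators lying on it are the edges of $G$ not incident to $v_i$; this subgraph is connected on $2n-1$ vertices (for $n\ge 3$), so its incidence matrix has rank $2n-2$ and the face is a facet. The same holds for $c_j=0$ by symmetry. For the nontrivial face $c_k=\sum_{i\ne k}r_i$, the supporting generators are the $2(n-1)$ edges incident to $v_k$ or to $w_k$; these form two disjoint stars, each a tree on $n$ vertices, so the rank is $(n-1)+(n-1)=2n-2$ and again we have a facet. The $3n$ faces are pairwise distinct because their generator sets differ, and since the $3n$ inequalities already give a complete description, there are no further facets.

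Finally, your reading of the $k$-th nontrivial facet as ``all directed edges among the nodes $[n]\setminus\{k\}$ vanish'' matches the paper's explicit $n=3$ and $n=4$ patterns (each of the four patterns for $n=4$ is indexed by the single omitted node), and explains why these zero patterns need not force any row or column margin to vanish.
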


%, like the ones presented above. We conjecture that this in fact is true for any $n$.
%{\bf
%should this last statement be singled out as a "Conjecture", officially?? }

Finally, we point out that that the matrix $B$ described above plays a crucial role in describing the Markov bases for $p_1$ model as we described in our previous work \cite{petr:rina:fien:2009}, where we refer to $B$ as the \emph{common submatrix} ofall the $p_1$ models. Indeed, in Theorem%s \ref{thm:simplified-noRho-decomposition}, \ref{thm:simplified-EdgeRho-decomposition} and
~\ref{thm:essentialMarkovForModel} and two other related results summarized in Section~\ref{sec:markov}, we showed that the Markov basis of the toric ideal generated by $B$ essentially determines the Markov bases for the simplified models.

%%%%%%%%%%%%%%%%%%%%%%%%%%%%%%%%%%%%%%%%%%%%%%%%%%%%%%%%%%%%%%%%%%%%%%%%%%%%%%%%%%%%
%%%%%%%%%%%%%%%%%%%%%%%%%%%%%%%%%%%%%%%%%%%%%%%%%%%%%%%%%%%%%%%%%%%%%%%%%%%%%%%%%%%%
%%%%%%%%%%%%%%%%%%%%%%%%%%%%%%%%%%%%%%%%%%%%%%%%%%%%%%%%%%%%%%%%%%%%%%%%%%%%%%%%%%%%
%%%%%%%%%%%%%%%%%%%%%%%%%%%%%%%%%%%%%%%%%%%%%%%%%%%%%%%%%%%%%%%%%%%%%%%%%%%%%%%%%%%%
%%%%%%%%%%%%%%%%%%%%%%%%%%%%%%%%%%%%%%%%%%%%%%%%%%%%%%%%%%%%%%%%%%%%%%%%%%%%%%%%%%%%

\section*{Acknowledgements}
The authors are grateful to Xiaolin Yang for helping with the computations based on {\tt polymake}. The first and third authors were partially supported by NSF grant DMS-0631589.

\end{document}